\def\today{\ifcase\month\or
  January\or February\or March\or April\or May\or June\or
  July\or August\or September\or October\or November\or December\fi
  \space\number\day, \number\year}
\newtheorem{theorem}{Theorem}
\newtheorem{conjecture}{Conjecture}
\newtheorem{lemma}[theorem]{Lemma}
\newtheorem{remark}[theorem]{Remark}
\newtheorem{definition}[theorem]{Definition}
\theoremstyle{definition}
\newtheorem{example}[theorem]{Example}
\newcommand{\I}{\mathcal{I}}
\newcommand{\n}{\mathbb{N}}
\newcommand{\z}{\mathbb{Z}}
\renewcommand{\r}{\mathbb{R}}
\newcommand{\re}{{\rm Re}\,}
\newcommand{\ft}{\widehat}
\begin{document}


\title[]{On a Sharp Fourier Extension Inequality on the Circle with Lacunary Spectrum}
\author[Gon\c{c}alves]{Felipe Gon\c{c}alves}
\date{\today}
\subjclass[2010]{}
\keywords{}
\address{The University of Texas at Austin, 2515 Speedway, Austin, TX 78712, USA  \newline   {\&} IMPA - Instituto de Matemática Pura e Aplicada, Rio de Janeiro, 22460-320, Brazil.}
\email{felipe.ferreiragoncalves@austin.utexas.edu}
\email{goncalves@impa.br}
\author{João Paulo Ferreira}
\date{\today}
\subjclass[2010]{}
\keywords{}
\address{IMPA - Instituto de Matemática Pura e Aplicada, Rio de Janeiro, 22460-320, Brazil.}
\email{joao.ferreira@impa.br}
\allowdisplaybreaks


\begin{abstract}
We prove a sharp Fourier extension inequality on the circle for the Tomas-Stein exponent for functions whose spectrum $\{\pm \lambda_n\}$ satisfies $\lambda_{n+1}>3 \lambda_{n}$.
\end{abstract}


\maketitle


\section{Introduction}

Fourier restriction theory has been established as one of the most relevant areas in modern harmonic analysis. The celebrated Tomas-Stein theorem lies at the heart of Fourier restriction theory; in the spherical case, its dual version states that
\begin{equation}\label{TomasSteinInequality}
	||\widehat{f\sigma}||_{L^{\frac{2(d+1)}{d-1}}(\r^d)}\leq C ||f||_{ L^2(\mathbb{S}^{d-1})},
\end{equation}
where $\sigma$ denotes the embedded surface measure on $\mathbb{S}^{d-1}$ and
$$\widehat{f\sigma}(x)=\int_{\mathbb{S}^{d-1}}f(w)e^{-i x \cdot w}d\sigma(w).$$
Questions concerning the best constant in \eqref{TomasSteinInequality} are quite more recent and have been proven to be surprisingly challenging. Foschi, in his celebrated paper \cite{Foschi2015}, proved that when $d=3$ constant functions are extremizers for \eqref{TomasSteinInequality} and all extremizers are of form $ce^{ix_0\cdot w}$ where $c$ is a constant. Dimension $d=3$ is the only dimension where the best constant is known for the sphere, although other related sharp extension inequalities have been investigated (see \cite{Biswas2025}, \cite{Carneiro2024a}, \cite{Carneiro2024}, \cite{Carneiro2015}, \cite{Carneiro2019}, \cite{Ciccone2023}, \cite{GonzalezRiquelme2024}). Sharp Tomas-Stein inequalities for the sphere and other conic sections can be tackled when the exponent is even, using the natural convolution structure that emerges from the problem. It is conjectured that constant functions are the only extremizers (modulo symmetries) for \eqref{TomasSteinInequality} for every $d\geq 2$. For $d=2$, in \cite{Carneiro2017}, the conjecture was proven to be \textit{locally} true, in the sense that constant functions are locally extremizers. Later in \cite{Goncalves2022}, Gonçalves and Negro proved that constant functions are local extremizers of \eqref{TomasSteinInequality} for $2\leq d \leq 60$, which adds an evidence for the conjecture.

The case $d=2$ is the last remaining even exponent coming from extension problems associated to conic sections (see \cite{Negro} and \cite{OliveiraeSilva2024}), hence it naturally attracts a lot of attention.  We let
$$C_{\text{opt}}:=\sup_{\stackrel{f\in L^2(\mathbb{S}^1)}{f\neq 0}}\dfrac{||\widehat{f\sigma}||_{L^{6}(\r^2)}^6}{||f||_{L^2(\mathbb{S}^1)}^6}.$$
As far as the $L^2\to L^6$ estimate in dimension $d=2$ is concerned, much partial progress has been made; it has been long established in \cite{Shao2016} that extremizers do exist. Moreover, they are even, can be taken as non-negative and smooth. We let $J_{n}$ denote the Bessel function of first kind, defined as $2\pi (-i)^nz^n J_n(|z|)=|z|^{n} \widehat{(z^n \sigma)}(z)$, where $z=x+iy$ with $x,y\in \r$.
\begin{conjecture}[Case $d=2$]\label{Conjecture}
	For all $f\in L^2(\mathbb{S}^1)\backslash\{0\}$,
	\begin{equation}\label{Inequalityofratios}
		\dfrac{||\widehat{f\sigma}||^6_{L^6(\r^2)}}{||f||^6_{L^2(\mathbb{S}^1)}}\leq C_{\text{opt}}=\dfrac{||\widehat{\sigma}||^6_{L^6(\r^2)}}{||\boldmath{\boldsymbol{1}}||^6_{L^2(\mathbb{S}^1)}}=(2\pi)^4\left(\int_0^{\infty}J_0^6(r)rdr\right).
	\end{equation}
\end{conjecture}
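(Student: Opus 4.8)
Since the Tomas--Stein exponent for $\mathbb S^1$ is the even integer $6$, I would first dualise the $L^6$ norm through a triple convolution: with $g=f\sigma$ one has $\widehat g^{\,3}=\widehat{g*g*g}$, so Plancherel on $\r^2$ gives $\|\widehat{f\sigma}\|_{L^6(\r^2)}^6=(2\pi)^2\|(f\sigma)*(f\sigma)*(f\sigma)\|_{L^2(\r^2)}^2$. Since extremizers are known to exist and to be even, nonnegative and smooth \cite{Shao2016}, it suffices to prove \eqref{Inequalityofratios} for such $f$; expanding $f(e^{i\theta})=\sum_{n\in\z}c_n e^{in\theta}$ (so that $(c_n)$ is real, symmetric and positive definite) and using $\int_0^{2\pi}e^{iy\cdot e^{i\theta}}e^{in\theta}\,d\theta=2\pi i^{n}J_n(|y|)e^{in\arg y}$ together with the $SO(2)$-invariance of $\sigma$, a direct computation turns $\|(f\sigma)^{*3}\|_{L^2}^2$ into an explicit positive multiple of the \emph{sextic form}
\begin{equation*}
Q(c):=\sum_{\substack{\vec n\in\z^6\\ n_1+n_2+n_3=n_4+n_5+n_6}}c_{n_1}c_{n_2}c_{n_3}\,\overline{c_{n_4}c_{n_5}c_{n_6}}\;T(\vec n),\qquad T(\vec n):=\int_0^\infty\Big(\prod_{j=1}^6 J_{n_j}(r)\Big)\,r\,dr .
\end{equation*}
As $Q(\mathbf 1)=T(\vec 0)=\int_0^\infty J_0^6(r)\,r\,dr$, Conjecture \ref{Conjecture} is \emph{equivalent} to the scale-invariant estimate $Q(c)\le T(\vec 0)\big(\sum_n|c_n|^2\big)^3$; the only structure at hand is the frequency-conservation constraint $n_1+n_2+n_3=n_4+n_5+n_6$.

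I would then split $Q=Q_{\mathrm{diag}}+Q_{\mathrm{off}}$, where $Q_{\mathrm{diag}}$ collects the sextuples with $\{n_1,n_2,n_3\}=\{n_4,n_5,n_6\}$ as multisets. Grouping this diagonal sum by the common multiset $S=\{s_1,s_2,s_3\}$ (with $M_S$ orderings), the $S$-block of $Q_{\mathrm{diag}}$ equals $M_S^2\,|c_{s_1}c_{s_2}c_{s_3}|^2\int_0^\infty J_{s_1}^2 J_{s_2}^2 J_{s_3}^2\,r\,dr$, while the $S$-block of $T(\vec 0)\big(\sum|c_n|^2\big)^3$ equals $M_S\,T(\vec 0)\,|c_{s_1}c_{s_2}c_{s_3}|^2$; thus $Q_{\mathrm{diag}}\le T(\vec 0)\big(\sum|c_n|^2\big)^3$ follows from the family of \emph{Bessel majorization inequalities}
\begin{equation*}
M_S\int_0^\infty J_{s_1}^2(r)\,J_{s_2}^2(r)\,J_{s_3}^2(r)\,r\,dr\;\le\;\int_0^\infty J_0^6(r)\,r\,dr ,
\end{equation*}
strict unless $s_1=s_2=s_3=0$. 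The cases $S=\{n,0,0\}$, i.e.\ $3\int_0^\infty J_0^4 J_n^2\,r\,dr\le\int_0^\infty J_0^6\,r\,dr$, are exactly the (strictly negative) diagonal Hessian entries computed by Carneiro--Oliveira e Silva--Sousa \cite{Carneiro2017}; the remaining cases I would obtain from the decay $\int_0^\infty J_{s_1}^2 J_{s_2}^2 J_{s_3}^2\,r\,dr\to 0$ as $\max_i|s_i|\to\infty$ (using $\|J_n\|_\infty\sim|n|^{-1/3}$), which leaves only finitely many explicit integrals to check. When the spectrum of $f$ lies in a set all of whose triple sums are distinct, $Q_{\mathrm{off}}$ vanishes and these inequalities (together with the elementary slack in the cross terms) already close the argument; the genuine content of the conjecture is therefore the \emph{resonant} interactions, with $n_1+n_2+n_3=n_4+n_5+n_6$ but $\{n_1,n_2,n_3\}\neq\{n_4,n_5,n_6\}$.

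\textbf{The main obstacle.} Everything reduces to showing that $Q_{\mathrm{off}}$ cannot overcome the (strictly positive) deficit left by the diagonal bound, which amounts to controlling the sign and magnitude of $T(\vec n)$ on resonant sextuples. On $\mathbb S^2$ Foschi \cite{Foschi2015} could close the analogous quartic problem because $\sigma*\sigma$ is an elementary function --- a constant times $|\xi|^{-1}$ on $\{|\xi|\le 2\}$ --- which produces a miraculous algebraic identity forcing a favourable sign; on $\mathbb S^1$ the object governing $T(\vec n)$ is the triple autoconvolution $\sigma^{*3}$, a non-elementary complete-elliptic-integral function that is merely Lipschitz (with a corner at $|\xi|=1$, where the branch singularities of $\sigma*\sigma$ sweep across the circle), and no comparably clean identity is apparent. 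Moreover $|\widehat\sigma(x)|\sim|x|^{-1/2}$ is borderline for the Tomas--Stein exponent, so $T(\vec n)$ decays only slowly as the frequencies spread apart and a crude triangle-inequality bound on $Q_{\mathrm{off}}$ is lossy; the dangerous terms are exactly the near-resonant sextuples whose two triples lie far apart. To handle these I would (i) freeze a finite low-frequency block $|n|\le N$, reducing the inequality there to a polynomial inequality in finitely many variables amenable to a sum-of-squares / semidefinite certificate --- exploiting positive definiteness of $(c_n)$ --- or to rigorous interval arithmetic, and (ii) control the high-frequency tail $|n|>N$ by sharp quantitative Bessel-decay estimates anchored to the local bound of \cite{Carneiro2017}, showing that $L^2$-mass carried at high frequencies strictly lowers $Q(c)/\|f\|_2^6$. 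Making (i) and (ii) meet --- proving that $N$ can be taken large enough for the tail to be genuinely subordinate while a polynomial certificate still exists --- is the step I expect to be the main obstacle; overcoming it would most plausibly require an algebraic substitute for Foschi's $\mathbb S^2$ identity adapted to the elliptic-integral kernel $\sigma^{*3}$.
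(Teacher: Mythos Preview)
The statement is a \emph{conjecture} in the paper, not a theorem; the paper does not prove it and explicitly presents it as open. What the paper does prove is the special case Theorem~\ref{Lacunarycaseq>3}, where $\text{spec}(f)\subset\{\pm\lambda_n\}$ with $\lambda_{n+1}/\lambda_n>3$. Your proposal is likewise not a proof: you set up the problem, isolate the obstruction, and sketch a two-step programme (SOS/interval certificate on $|n|\le N$ plus tail control for $|n|>N$) that you yourself flag as incomplete.

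Your set-up coincides with the paper's. Your sextic form $Q(c)$ is formula~\eqref{FormulaEmanuelDiogo}, your $T(\vec n)$ is the paper's $I(n_1,\dots,n_6)$, and your diagonal ``Bessel majorization inequalities'' $M_S\int J_{s_1}^2J_{s_2}^2J_{s_3}^2\,r\,dr\le\int J_0^6\,r\,dr$ are exactly Lemma~\ref{BoundsforF}, proved by the same $|n|^{-1/3}$ decay plus finitely many numerical checks. Your remark that distinct triple sums kill $Q_{\rm off}$ is essentially the $\text{P}(3)$ result of \cite{Ciccone2024} on which the paper builds. Where the approaches diverge is in handling $Q_{\rm off}$: you propose a low/high frequency split, whereas the paper restricts to lacunary spectra precisely so that the resonant sextuples can be \emph{classified} combinatorially (Lemma~\ref{P3exceptions}: each exceptional $D$ has exactly two representations, of eight explicit shapes), after which Cauchy--Schwarz on $I$ and weighted AM--GM ($2xy\le x^2/\varepsilon+\varepsilon y^2$ with a well-chosen $\varepsilon_D$ per exception) absorb every off-diagonal term into diagonal ones, and the numerical slack in Lemma~\ref{BoundsforF} is checked to suffice.

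The genuine gap in your programme is step~(ii). For a general symmetric spectrum the trivial representations $D=D+m-m$ already couple every high mode $m$ to every low $D$, so $Q_{\rm off}$ contains infinitely many cross terms linking the tail to the low block; since $T(\vec n)$ decays only like a small negative power, a tail bound ``anchored to the local result of \cite{Carneiro2017}'' cannot be expected to dominate these couplings uniformly in $N$. The paper sidesteps this by never attempting the full conjecture; its contribution is exactly to show that for lacunary ratio $>3$ the exceptional couplings are sparse and structured enough to be absorbed one by one. Your programme would need either a new algebraic identity for $\sigma^{*3}$ (as you suspect) or a much sharper global control of the off-diagonal kernel than is currently available.
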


Ciccone and Gonçalves in \cite{Ciccone2024} proved Conjecture \ref{Conjecture} for the subspace of functions whose spectrum is under \textit{arithmetic constraints} (see Definition \ref{DefP(3)set} below for a precise definition), meaning that as long as the set $\text{spec}(f):=\{k\in \z;\,\widehat{f}(k)\neq 0\}$ is sufficiently sparse, the conjecture holds. The arithmetic constraints that they impose on $\text{spec}(f)$ is a generalizations of a \textit{Sidon set}. Classically, a Sidon set $A\subset \z$ is a set for which each element of $A+A$ has unique representation modulo permutation. Ciccone and Gonçalves in \cite{Ciccone2024} impose that if $A=\text{spec}(f)$ then each element $x\in A+A+A$ either has unique representation modulo permutation or is trivially represented as $x=x+a-a$ for some $a\in A \cap (-A)$. It turns out that the special case $\text{spec}(f)=\{\pm \lambda_{n}\}$, where $\{\lambda_n\}$ is a lacunary sequence with $\lambda_{n+1}/\lambda_{n}>q$ and $q\geq 5$ falls into these conditions. However, the spectrum such as $\{0\}\cup \{\pm 5^n\}_{n\geq 0}$ or $\{0\}\cup \{\pm 2^n\}_{n\geq 0}$ are not contemplated by their result. A natural question is:  

\emph{Can we prove Conjecture \ref{Conjecture} for $\text{spec}(f)\subset \{0\}\cup \{\pm q^n\}_{n\geq 0}$ for small $q>1$? More generally, can we prove Conjecture \ref{Conjecture} when $\text{spec}(f)$ is lacunary with small lacunary constant?}

\begin{theorem}\label{Lacunarycaseq>3}
	Let $f\in L^2(\mathbb{S}^1)$ be such that $\text{spec}(f)\subset A_{\lambda,q}:=\{\pm \lambda_n;n\geq 0\}$, where $\{\lambda_n\}$ is a lacunary sequence of non-negative integers satisfying $\lambda_{n+1}/\lambda_{n}>3$, and $\lambda_{0}:=0$. Then
	\begin{equation}\label{SharpInequality}
		||\widehat{f\sigma}||^6_{L^6(\r^2)}\leq (2\pi)^4\left(\int_0^{\infty}J_0^6(r)rdr\right)||f||^6_{L^2(\mathbb{S}^1)},
	\end{equation}
	and equality is attained if and only if $\widehat{f}(k)=0$ for every $k\neq 0$.
\end{theorem}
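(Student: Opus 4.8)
The plan is to exploit the fact that the sixth power of the extension norm is, by Plancherel, the $L^2$ norm of the triple convolution $f\sigma * f\sigma * f\sigma$, and that this convolution decomposes into frequency pieces indexed by triples $(n_1,n_2,n_3)$ of elements of $A_{\lambda,q}$. First I would write $||\widehat{f\sigma}||_{L^6}^6 = (2\pi)^2 \| (f\sigma)^{*3} \|_{L^2(\r^2)}^2$ and expand $f\sigma$ in its Fourier modes on the circle, so that $f = \sum_{\pm} \sum_{n\ge 0} a_{\pm n} e^{\pm i \lambda_n \theta}$ (with $\lambda_0 = 0$ treated once). The convolution $\sigma_j * \sigma_k$, where $\sigma_j$ denotes the measure with density $e^{i\lambda_j\theta}$, is an explicit radial-times-harmonic object supported on the annulus $\{\,|\,|\xi| - 0\,| \le 2\,\}$... more precisely on a disc of radius $2$, and the triple convolutions live on the disc of radius $3$. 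The key geometric input of lacunarity $\lambda_{n+1} > 3\lambda_n$ is that if we rescale, a mode $\lambda_n$ with $n$ large dominates: the sum $\lambda_{n_1} + \lambda_{n_2} + \lambda_{n_3}$ of three signed frequencies can land in a given dyadic shell only for a controlled set of index triples, and cross terms are forced to cancel.

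The heart of the argument is a term-by-term comparison with the constant-function case. I would organize the triple sum $(f\sigma)^{*3} = \sum_{n_1,n_2,n_3} c_{n_1} c_{n_2} c_{n_3} \, \widehat{\sigma_{n_1}} \widehat{\sigma_{n_2}} \widehat{\sigma_{n_3}}$ (where $c_n$ ranges over the coefficients $a_{\pm n}$ and signs) by the value of the total frequency $s = \pm\lambda_{n_1} \pm \lambda_{n_2} \pm \lambda_{n_3}$. Because of the $3$-lacunarity, the representations of a given integer $s$ as such a signed sum are rigid: either $s=0$, achieved only by "balanced" triples of the form $\{\lambda_n, -\lambda_n, 0\}$, $\{\lambda_n,-\lambda_n,\lambda_m\} $ with the $\lambda_m$ part... — I need to be careful — the genuinely constrained statement is that the \emph{only} coincidences $\pm\lambda_{n_1}\pm\lambda_{n_2}\pm\lambda_{n_3} = \pm\lambda_{m_1}\pm\lambda_{m_2}\pm\lambda_{m_3}$ are the forced ones, exactly as in the $P(3)$-set / generalized-Sidon framework of \cite{Ciccone2024}. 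So I would first prove the purely arithmetic lemma: \emph{a $3$-lacunary set $\{\pm\lambda_n\}$ satisfies the additive-energy condition of \cite{Ciccone2024}}, replacing their hypothesis $q\ge 5$ with $q>3$ by a sharper case analysis (the point $q>3$ rather than $q>2$ presumably being exactly what is needed so that $\lambda_{n_1}+\lambda_{n_2}+\lambda_{n_3}$ with $n_1 > n_2 \ge n_3$ cannot coincide with $\lambda_{n_1}' - \lambda_{n_2}'+ \lambda_{n_3}'$ type sums). Once that arithmetic fact is in hand, the analytic inequality \eqref{SharpInequality} follows by invoking the result of Ciccone–Gonçalves stated in the excerpt, applied to the subspace of functions with spectrum in $A_{\lambda,q}$.

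For the equality characterization I would trace through the case analysis: equality in the underlying $L^2$ estimate requires that all the "non-diagonal" interference terms vanish and that the surviving diagonal terms match the constant profile, which by the orthogonality extracted from lacunarity forces $a_{\pm\lambda_n} = 0$ for all $n\ge 1$, i.e. $\widehat f(k) = 0$ for $k\ne 0$; conversely the constant function is an extremizer by Foschi-type identities (this is the content of the right-hand side of \eqref{Inequalityofratios}). Concretely, I'd isolate the contribution of modes with index $\ge 1$ and show it is \emph{strictly} detrimental unless it is zero, using that the Bessel-weight integral $\int_0^\infty J_0^a J_{\lambda}^b J_{\mu}^c\, r\,dr$ appearing in each frequency block is strictly smaller, after accounting for $L^2$-normalization, than the all-$J_0$ block — this monotonicity in the Bessel index is the quantitative engine.

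The main obstacle I expect is the arithmetic lemma at the threshold $q > 3$: showing that $3$-lacunarity is \emph{exactly} enough to rule out all accidental three-fold additive coincidences in $\{\pm\lambda_n\}$. The constraint $\lambda_{n+1} > 3\lambda_n$ is natural because $\lambda_n > \lambda_{n-1}+\lambda_{n-2}+\cdots$ would need ratio $>2$, but controlling \emph{signed} triple sums — e.g. ensuring $\lambda_n - \lambda_{n-1} + \lambda_{n-2}$ is not hit twice, or that it doesn't collide with some $\lambda_{m_1}+\lambda_{m_2}-\lambda_{m_3}$ — is where the extra room up to $3$ (rather than $2$) gets consumed, and making this case analysis clean and exhaustive (there are on the order of a dozen sign/ordering patterns) is the delicate part. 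A secondary technical point is handling the $\lambda_0 = 0$ mode, since $0$ can be repeated and creates degenerate representations; these have to be bookkept separately so they do not break either the inequality or the rigidity of the equality case.
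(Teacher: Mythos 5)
Your plan hinges on a purely arithmetic reduction: prove that a $3$-lacunary symmetric set $\{\pm\lambda_n\}$ (with $0$ included) satisfies the additive condition of Ciccone--Gonçalves, i.e.\ is a $\mathrm{P}(3)$-set, and then invoke their theorem as a black box. This step is false, and it is precisely the obstruction the paper is written to overcome. For example, with $\lambda_n=5^n$ (lacunarity constant $5>3$) one has the genuine coincidence $3\cdot 5^n=5^{n+1}-5^n-5^n=5^n+5^n+5^n$, and with $\lambda_n=4^n$ one has $2\cdot 4^n=4^{n+1}-4^n-4^n=4^n+4^n+0$ and $3\cdot 4^n=4^{n+1}-4^n+0=4^n+4^n+4^n$. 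These are nontrivial, non-unique representations in $A^3$, so $A_{\lambda,q}$ is \emph{not} a $\mathrm{P}(3)$-set for $3<q\le 5$ (the Ciccone--Gonçalves hypothesis really needs ratio $>5$, and even then not with $0$ adjoined), and no amount of sharper case analysis will remove these coincidences --- they are there. Consequently the proposed one-line deduction of \eqref{SharpInequality} from the earlier result collapses, and the equality characterization, which you base on ``all interference terms vanish by orthogonality,'' also has no support: the interference terms attached to the exceptional frequencies $D$ above are genuinely present and must be beaten quantitatively, not ruled out combinatorially.

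What the paper actually does at this point is: (i) classify \emph{all} exceptions to the $\mathrm{P}(3)$ property for $q\geq 3$ (Lemma \ref{P3exceptions}), showing each exceptional $D$ has exactly two representations, one of which has a repeated entry; (ii) split $\|\widehat{f\sigma}\|_{L^6}^6=S_{\mathrm{P}(3)}+S_E$ and estimate the cross terms in $S_E$ via Cauchy--Schwarz on $I(n_1,\dots,n_6)$ together with a weighted AM--GM with parameters $\varepsilon_D$ chosen per exception type, plus a one-parameter family of elementary inequalities (the parameter $b$) refining the $r^3s$ bound used for the trivial points; and (iii) close the argument by verifying finitely many systems of inequalities using numerically certified \emph{strict} lower bounds on $F(n_1,n_2,n_3)=\I(0,0,0)/\I(n_1,n_2,n_3)$ (Lemma \ref{BoundsforF}), including specific small cases like $\{|m_1|,|m_2|\}=\{1,4\}$ and $\{2,8\}$ forced by the classification; the strictness of these bounds is also what yields the equality case. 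Your sketch contains no substitute for steps (ii)--(iii), and its central lemma in step (i) asserts the opposite of what is true, so the proposal as written does not prove the theorem.
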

\subsection{Motivation, Notation and Strategy} 
The novelty in the proof of Theorem \ref{Lacunarycaseq>3} relies on the fact that, although lacunary sequences are not always in the conditions of \cite{Ciccone2024}, as long as the lacunary constant is large enough, the points that fail to satisfy these conditions can be classified, allowing a precise decomposition of the norm $||\widehat{f\sigma}||_{L^6(\r^2)}^6$ (see Lemma \ref{P3exceptions}).

Recall that a sequence $\{\lambda_{n}\}$ of non-negative integers is a \textit{lacunary sequence} if $\lambda_{n+1}/\lambda_{n}>q$ for some $q>1$. In this case, we say that $\{\lambda_n\}$ has \textit{lacunary constant $q$}.  To verify inequality \eqref{Inequalityofratios} for all $L^2$ functions, one could prove \eqref{Inequalityofratios} for all functions whose spectrum is lacunary. Similarly, one could prove  \eqref{Inequalityofratios} for all functions whose spectrum is finite, i.e., \textit{band-limited} functions. The band-limited case has been considered in \cite{OliveiraeSilva2022}, where they prove that if $\text{spec}(f)\subset [-30,30]$ then inequality \eqref{Inequalityofratios} holds, and later improved in \cite{Barker2023} for functions with $\text{spec}(f)\subset [-120,120]$. 

To prove Theorem \ref{Lacunarycaseq>3}, we use an explicit formula for $||\widehat{f\sigma}||_{L^6(\r^2)}^6$. Letting $A=\text{spec}(f)$ we have
\begin{equation}\label{FormulaEmanuelDiogo}
	\begin{array}{rl}
		(2\pi)^{-7}||\widehat{f\sigma}||_{L^6(\r^2)}^6&=\displaystyle{\sum_{D\in A^3}\sum_{\substack{n_1,\dots,n_6\in A\\n_1+n_2+n_3=D\\n_4+n_5+n_6=D}} \widehat{f}(n_1)\widehat{f}(n_2) \widehat{f}(n_3)\overline{\widehat{f}(n_4)}\overline{\widehat{f}(n_5)}\overline{\widehat{f}(n_6)}I(n_1,\dots,n_6)},
	\end{array}
\end{equation}
where throughout this paper we convention the notation $A^h:=\{n_1+\dots+n_h;\,n_1,\dots,n_h\in A\}$, and $I(n_1,\dots,n_6)$ denotes the integral
$$I(n_1,\dots,n_6)=\int_0^{\infty}J_{n_1}(r)J_{n_2}(r)J_{n_3}(r)J_{n_4}(r)J_{n_5}(r)J_{n_6}(r)rdr.$$
As it turns out, a direct application of Plancherel shows that $||f||_{L^2(\mathbb{S}^1)}^6$ can be expanded in a way that resembles \eqref{FormulaEmanuelDiogo}:
\begin{equation}\label{Norma}
	(2\pi)^{-3}||f||_{L^2(\mathbb{S}^1)}^6=\displaystyle{\sum_{D\in A^3}\sum_{\substack{n_1,n_2,n_3\in A\\n_1+n_2+n_3=D}} |\widehat{f}(n_1)|^2|\widehat{f}(n_2) |^2|\widehat{f}(n_3)|^2}.
\end{equation}
If a good decomposition of the set $A^3$ is available, a direct comparison between \eqref{FormulaEmanuelDiogo} and \eqref{Norma} is possible, which is precisely the idea behind \cite{Ciccone2024}. They prove that if $A$ is a \text{P}(3)-set, among all $L^2$ functions $f$ with $\text{spec}(f)\subset A$, the ratio $||\ft{f\sigma}||_{L^6(\r^2)}/||f||_{L^2(\mathbb{S}^1)}$ is maximized at constant functions. For the sake of completeness, here we recall the definition of a \text{P}(3)-set.
\begin{definition}\label{DefP(3)set}
	We say that a set $A\subset \z$ is a \text{P}(3)-set if for every $D\in A^3$, one and only one of the following holds:
	\begin{enumerate}
		\item[(i)] There exists a unique triple (modulo permutation) $\{n_1,n_2,n_3\}\subset A$ such that $D=n_1+n_2+n_3$;
		\item[(ii)] $D$ has trivial representation, meaning $D=D+m-m$, where $m \in A\cap (-A)$.
	\end{enumerate}
\end{definition}

Given $A\subset \z$, we say that $D\in A^3$ satisfies the $\text{P}(3)$ property (or is a $\text{P}(3)$ point) if $D$ verifies the definition above, possibly even in the case where $A$ is not a $\text{P}(3)$-set itself. Otherwise, we call $D$ an exception. If $D \in A^3$ is a  $\text{P}(3)$ point we say that $D$ is \textit{unique} if it satisfies (i) or  \textit{trivial} if it satisfies (ii).

\subsection{Further Difficulties} One of the main difficulties inherent to the problem of lacunary spectrum with lacunary constant $q<3$ is the fact that the symmetric set $A=\{\pm \lambda_{n}\}$ will no longer be an $\text{P}(2)$-set.
\begin{definition}\label{DefP(2)set}
	A set $A\subset \z$ is said to be an $\text{P}(2)$-set if for every $D\in A^2$, one and only one of the following holds:
	\begin{enumerate}
		\item[(i)] Either there exists a unique pair (modulo permutation) $\{n_1,n_2\}\subset A$ such that $D=n_1+n_2$;
		\item[(ii)] Or $D=0$, which means $D=m-m$ for every $m \in A\cap (-A)$.
	\end{enumerate}
\end{definition}
As in the $\text{P}(3)$ property, we also say that a point $D\in A^2$ satisfying the definition is a $\text{P}(2)$ point, or that $D$ satisfies the $\text{P}(2)$ property, and points in $A^2$ that do not satisfy the Definition \ref{DefP(2)set} are called \textit{exceptions} for the $\text{P}(2)$ property.

The connection between $\text{P}(2)$ and $\text{P}(3)$ sets which it is worth emphasizing is the following: in a symmetric set $A$, every exception for the $\text{P}(2)$ property gives rise to a sort of mixed type exception for the $\text{P}(3)$ property of form $D+m-m=n_1+n_2+n_3$ with $n_i+n_j\neq 0$, $i\neq j$ and arbitrary $m\in A$. In other words, in symmetric sets which are not P(2) sets, there exist exceptions for the \text{P}(3) property with trivial representations, which makes it harder to compare \eqref{FormulaEmanuelDiogo} with \eqref{Norma} in a satisfactory way. Even the case $\text{spec}(f)=\{\pm 3^n\}\cup \{0\}$ has been out of reach since these mixed exceptions are present as in $3+3^n-3^n=1+1+1=9-3-3$. A feature that can be helpful is that, modulo multiplication by $\pm 3^m$, in this case that is the only mixed exception for the \text{P}(3) property.

The most important information about the set $\text{spec}(f)$ that is required is how many solutions an equation as $D=n_1+n_2+n_3$ may have, where $n_i \in \text{spec}(f)$; in the set up $\text{spec}(f)=\{\pm \lambda_{n}\}$, where $\{\lambda_{n}\}$ is lacunary, the number of such solutions increases as the lacunary constant $q$ decreases, in order to go below the barrier $q=3$, a classification result as Lemma \ref{P3exceptions} is not the issue, instead the issue has been to numerically evaluate the integrals $I(n_1,\dots,n_6)$ for small values of $n_1,\dots,n_6$ and still obtain bounds which are small in comparison to the coefficients that naturally appear in the sum \eqref{FormulaEmanuelDiogo}, which is unfeasible so new ideas are needed to tackle the problem with lacunary spectrum with constant $1+\varepsilon$ for $\varepsilon$ small.


\section{Preliminary Results}

From now on we set $A_{\lambda,q}:=\{\pm \lambda_{n}\}$, where $\{\lambda_n\}\subset \z_+=\{0,1,2,\dots\}$ is a lacunary sequence satisfying $\lambda_0=0$ and $\lambda_{n+1}/\lambda_{n}>q\geq 3$ for every $n\geq 1$. We can focus on infinite sequences $\{\lambda_n\}$ since the finite case would easily follow.

As explained in the introduction, the key part in the argument is to have in hands a good decomposition of $A_{\lambda,q}^3$. More specifically, we are interested in information about the number of solutions of the equation $D=n_1+n_2+n_3$, where  $D\in A_{\lambda,q}^3$ and $n_1,n_2,n_3\in A_{\lambda,q}$. As the number $q$ increases, the more sparse the sequence $A_{\lambda,q}$ is and better behaved the set $A^3_{\lambda,q}$ becomes. For the sake of simplicity let $A_{\lambda,q}^3=A_{\text{P(3)}}^3\cup A_{E}^3$, where $A_{\text{P}(3)}^3:=\{ D\in A_{\lambda,q}^3;\, \text{$D$ is a P(3) point}\}$ and $A_{E}^3:=\{ D\in A_{\lambda,q}^3;\, \text{$D$ is not a P(3) point}\}$. A full description of $A_E^3$ is given by the next lemma.
\begin{lemma}\label{P3exceptions}
	Let $A_{\lambda,q}=\{\pm \lambda_{n}\}$. Then $A_{E}^3\neq \varnothing$ if and only if one of the equations
	\begin{equation}\label{DescriptionOfExceptions}
		\begin{array}{rlr}
			\lambda_{n+1}&=\lambda_{n}+\lambda_{n}+\lambda_{n}+\lambda_{m}+\lambda_{k},& 0\leq \lambda_{m}\leq \lambda_k\leq \lambda_{n},\lambda_{k}>0,\\
			\lambda_{n+1}+\lambda_{m}&=\lambda_{n}+\lambda_{n}+\lambda_{n}+\lambda_{k},&0\leq \lambda_{m}<\lambda_{k}\leq \lambda_{n},
		\end{array}
	\end{equation}
	has a solution. Furthermore, if $D \in A^3_E$ then $D$ is expressed in one of the following ways:
	\begin{enumerate}
		\item[(i)] $ D=\pm (\lambda_{n+1}-\lambda_{n}-\lambda_{n})=\pm(\lambda_{n}+\lambda_{m}+\lambda_{k})$, where $0\leq \lambda_{m}\leq \lambda_{k}\leq \lambda_{n}$ and $\lambda_{k}>0$,
		\item[(ii)] $ D=\pm(\lambda_{n+1}-\lambda_{n}-\lambda_{m})=\pm(\lambda_{n}+\lambda_{n}+\lambda_{k})$, where $0\leq \lambda_{m}\leq \lambda_{k}\leq \lambda_{n}$ and $\lambda_{k}>0$,
		\item[(iii)] $ D=\pm(\lambda_{n+1}-\lambda_{n}-\lambda_{k})=\pm(\lambda_{n}+\lambda_{n}+\lambda_{m})$, where $0\leq \lambda_{m}\leq \lambda_{k}\leq \lambda_{n}$ and $\lambda_{k}>0$,
		\item[(vi)] $ D=\pm(\lambda_{n+1}-\lambda_{m}-\lambda_{k})=\pm(\lambda_{n}+\lambda_{n}+\lambda_{n})$, where $0\leq \lambda_{m}\leq \lambda_{k}\leq \lambda_{n}$ and $\lambda_{k}>0$,
		\item[(v)] $ D=\pm(\lambda_{n+1}-\lambda_{n}-\lambda_{n})=\pm(\lambda_{n}-\lambda_{m}+\lambda_{k})$, where $0\leq \lambda_{m}< \lambda_{k}\leq \lambda_{n}$,
		\item[(vi)] $ D=\pm(\lambda_{n+1}-\lambda_{n}+\lambda_{m})=\pm(\lambda_{n}+\lambda_{n}+\lambda_{k})$, where $0\leq \lambda_{m}< \lambda_{k}\leq \lambda_{n}$,
		\item[(vii)] $ D=\pm(\lambda_{n+1}-\lambda_{n}-\lambda_{k})=\pm(\lambda_{n}+\lambda_{n}-\lambda_{m})$, where $0\leq \lambda_{m}< \lambda_{k}\leq \lambda_{n}$,
		\item[(viii)] $  D=\pm(\lambda_{n+1}+\lambda_{m}-\lambda_{k})=\pm(\lambda_{n}+\lambda_{n}+\lambda_{n})$, where $0\leq \lambda_{m}< \lambda_{k}\leq \lambda_{n}$.
	\end{enumerate}
	In particular, $D\in A^3_E$ if and only if, the equation $D=n_1+n_2+n_3$ for $n_1,n_2,n_3\in A_{\lambda,q}$ has exactly two different solutions (modulo permutations).
\end{lemma}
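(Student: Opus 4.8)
The plan is to recast the failure of the $\text{P}(3)$ property at a point $D$ as a single rigid additive identity and then extract \eqref{DescriptionOfExceptions} from it by elementary size estimates. First I would record a reduction. Since $\{\lambda_n\}$ is infinite, every $D\in A_{\lambda,q}$ admits the infinitely many representations $D=D+\lambda_j-\lambda_j$ and is therefore a $\text{P}(3)$ point of trivial type; hence an exception must satisfy $D\notin A_{\lambda,q}$, and for such $D$ no representation $D=n_1+n_2+n_3$ can have $n_i+n_j=0$ (that would force $D\in A_{\lambda,q}$). Consequently $D\in A_E^3$ if and only if $D\notin A_{\lambda,q}$ and $D$ has at least two distinct representations as a sum of three elements of $A_{\lambda,q}$. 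Fixing two such representations and subtracting one from the other yields a nontrivial vanishing relation $\sum_{i=1}^{6}\varepsilon_i\lambda_{c_i}=0$ with $\varepsilon_i\in\{\pm1\}$, in which the triples $\{\varepsilon_i\lambda_{c_i}\}_{i\le 3}$ and $\{-\varepsilon_i\lambda_{c_i}\}_{i\ge 4}$ are distinct and each sum to $D$.

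The core of the proof is a size analysis of this six-term relation using only $\lambda_{n+1}>3\lambda_n$ (so also $\lambda_{n-1}<\tfrac13\lambda_n$). Let $M$ be the largest index with $\lambda_M>0$ occurring among the $c_i$, let $t$ be its multiplicity, and let $s$ be its net signed coefficient. Comparing $|s|\lambda_M$ with the total contribution $\le(6-t)\lambda_{M-1}<\tfrac{6-t}{3}\lambda_M$ of the remaining terms forces either $s=0$ (so $t\in\{2,4,6\}$ and the $\lambda_M$-terms cancel) or $s=\pm1$ with $t=1$. The case $s=0$ is excluded by a short check on how the $\lambda_M$-terms are distributed between the two triples: either one triple contains a cancelling pair $\pm\lambda_M$ and is thus a trivial representation (so $D\in A_{\lambda,q}$), or matching the $\lambda_M$-content of the two triples forces them to coincide, or a residual two-term identity again forces $D\in A_{\lambda,q}$. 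Hence $s=\pm1$, $\lambda_M$ occurs exactly once, and after possibly negating the whole relation its term is $+\lambda_M$; then $\lambda_M$ equals a signed sum of five terms of value $\le\lambda_{M-1}$, so $3\lambda_{M-1}<\lambda_M\le 5\lambda_{M-1}$. Rerunning the same estimate one level down shows these five terms must contain at least three of index exactly $M-1$, necessarily all of the same sign, together with at most two further terms of index $\le M-2$; collecting like terms, the relation reduces to exactly one of the two equations in \eqref{DescriptionOfExceptions} with $n=M-1$, the side conditions $\lambda_k>0$, resp.\ $\lambda_m<\lambda_k$, being precisely what the strict inequality $\lambda_{n+1}>3\lambda_n$ requires.

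It then remains to read off the shape of $D$ and to prove there are exactly two representations. Recording which three of the six terms constitute the triple containing $\lambda_{n+1}$: for the first equation in \eqref{DescriptionOfExceptions} the two terms subtracted from $\lambda_{n+1}$ form one of $\{\lambda_n,\lambda_n\},\{\lambda_n,\lambda_m\},\{\lambda_n,\lambda_k\},\{\lambda_m,\lambda_k\}$, giving forms (i)--(iv), and likewise four possibilities for the second equation give (v)--(viii); the degenerate cases $\lambda_m=0$, $\lambda_m=\lambda_k$, or a term equal to $\lambda_0=0$ merely specialize some of these and are already covered by the stated inequalities. For uniqueness, such a $D$ satisfies $\lambda_n<|D|\le 3\lambda_n$, the lower bound coming from the triple containing $\lambda_{n+1}$ and the upper one from the triple with all indices $\le n$. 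Any representation of $D$ using an index $\ge n+2$ would, by applying the analysis above to its difference with the $\lambda_{n+1}$-triple, force $|D|>\lambda_{n+1}>3\lambda_n$, contradicting the upper bound; so every representation either uses $\lambda_{n+1}$ (necessarily once, else $D\in A_{\lambda,q}$ or $|D|>3\lambda_n$) or has all indices $\le n$. In the first case $D\mp\lambda_{n+1}$ is a nonzero integer of absolute value $<\lambda_{n+1}$ whose two-term representation over $\{\pm\lambda_j:j\le n\}$ is unique, and in the second the representation must use $\lambda_n$ (else $|D|\le 3\lambda_{n-1}<\lambda_n$) and peeling off its $\lambda_n$-part leaves a uniquely determined lower-order piece; here I would invoke the analogous, simpler fact that a symmetric lacunary set with $\lambda_{n+1}>3\lambda_n$ is a $\text{P}(2)$-set. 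Thus $D$ has at most one representation of each kind, hence exactly two. Conversely, any solution of \eqref{DescriptionOfExceptions} produces a $D\notin A_{\lambda,q}$ possessing one representation of each kind, that is, an exception, which completes the equivalence $A_E^3\neq\varnothing\iff$ \eqref{DescriptionOfExceptions} is solvable.

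The main obstacle is the bookkeeping packed into the middle two paragraphs: exhaustively eliminating the $s=0$ configurations of the six-term relation; verifying that the descent pins the second-largest index to exactly $M-1$ with the asserted multiplicity and signs (so that the relation is the one with consecutive indices $n,n+1$ and nothing looser); and confirming that the eight sign-placements (i)--(viii) are mutually distinct and exhaustive while the coincidences among $\lambda_m,\lambda_k,\lambda_n$ and $0$ are absorbed without creating spurious or missing cases.
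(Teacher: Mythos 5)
Your proposal is correct and follows essentially the same route as the paper: both arguments take two distinct representations of $D$, use the lacunarity bound $3\lambda_{s}<\lambda_{n+1}\le 5\lambda_{s}$ to force the collision to occur between consecutive indices and reduce it to the two equations in \eqref{DescriptionOfExceptions}, then enumerate the rearrangements into triples to obtain (i)--(viii), and finally combine $\lambda_n<|D|<\lambda_{n+1}$ with the $\text{P}(2)$ property of $A_{\lambda,q}$ to get exactly two representations (the paper's Remark \ref{Nodoublerepetition}). Your bookkeeping via the signed six-term relation and the multiplicity/net-sign of the top index is just a reorganization of the paper's ``positive sides, compare maxima'' argument, and if anything spells out more explicitly the counting the paper compresses into ``which is possible only if''.
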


\begin{example}
	Before we prove Lemma \ref{P3exceptions}, we illustrate how this lemma can be used to describe \text{P}(3) exceptions in specific examples. Let $A_{4}=\{\pm 4^{n-1};n\geq 1\}\cup \{0\}$ and $A_{5}=\{\pm 5^{n-1};n\geq 1\}\cup \{0\}$. As pointed out in \cite{Ciccone2024}, these sets are not \text{P}(3) sets. However, as the sets $\{4^{n-1};n\geq 1\}$ and $\{5^{n-1};n\geq 1\}$ are lacunary sequences with lacunary constant $>3$, every exception in $A_4^3$ and $A_5^3$ for the \text{P}(3) property must generate solutions for the equations \eqref{DescriptionOfExceptions}. Observe that for the case of $A_5$, equations \eqref{DescriptionOfExceptions} become
	\begin{equation*}
		\begin{array}{rlr}
			5^{n+1}&=5^{n}+5^{n}+5^{n}+\lambda_{m}+5^{k},& 0\leq \lambda_{m}\leq 5^k,0\leq k\leq n,\\
			5^{n+1}+\lambda_{m}&=5^{n}+5^{n}+5^{n}+5^{k},&0\leq \lambda_{m}<5^{k}, 0 \leq k \leq n,
		\end{array}
	\end{equation*}
	where $\lambda_m\in A_5$. It is easy to see that the first equation above has solutions only if $\lambda_m=5^k$ and $k=n$ (otherwise the right hand side would be strictly smaller than the left hand side), whereas the second equation has no solutions. Thus for the set $A_5$, equations \eqref{DescriptionOfExceptions} become simply $5^{n+1}=5^n+5^n+5^n+5^n+5^n$, $n\geq 0$. After performing all possible rearrangements of three terms to each side, we conclude that all exceptions $D\in A_5^3$ for the \text{P}(3) property are expressed as $D=\pm (5^{n+1}-5^n-5^n)=\pm (5^n+5^n+5^n)$. In other words, $\{D\in A^3_5; \text{$D$ is not a $\text{P}(3)$ point}\}=\{\pm 3\cdot 5^n; n\geq 0 \}$, and all exceptions are, modulo multiplication by $\pm 5^n$ for some $n$, written as the simple expression: $3=5-1-1=1+1+1$. Similarly, for the special case $A_4$, equations \eqref{DescriptionOfExceptions} become simply $4^{n+1}=4^n+4^n+4^n+4^n$, which gives us $\{D\in A^3_4; \text{$D$ is not an $\text{P}(3)$ point}\}=\{\pm 2\cdot 4^n; \, n\geq 0 \}\cup\{\pm 3\cdot 4^n; \, n\geq 0 \}$, and every such exception is written as, after a multiplication by $\pm 4^n$ for some $n\geq 0$, of one of the simple expressions: $2=4-1-1=1+1+0$ or $3=4-1+0=1+1+1$.
\end{example}

\begin{proof}[Proof of Lemma \ref*{P3exceptions}]
	Let $A_{\lambda,q}=\{\pm \lambda_n\}$, where $\{\lambda_n\}$ is a lacunary sequence with lacunary constant $q\geq 3$. Our goal is to find a description of when an equation such as
	\begin{equation}\label{IgualdadeA^3_E}
		n_1+n_2+n_3=n_4+n_5+n_6,\, n_i\in A_{\lambda,q},
	\end{equation}
	can be satisfied. To avoid trivialities, suppose that at least one $n_i$ is non-zero. We start by reorganizing the terms so we get only positive numbers in each side of the equation, i.e.,
	\begin{equation*}
		0<\lambda_{\alpha_1}+\dots+\lambda_{\alpha_k}=\lambda_{\beta_1}+\dots+\lambda_{\beta_l}
	\end{equation*}
	where $k+l\leq 6$, with $k,l\geq 1$ and $\{\lambda_{\alpha_1},\dots,\lambda_{\alpha_k},\lambda_{\beta_1},\dots,\lambda_{\beta_l}\}\subset \{|n_1|,\dots,|n_6|\}$. If $\max\{\lambda_{\alpha_1},\dots,\lambda_{\alpha_k}\}=\max\{\lambda_{\beta_1},\dots,\lambda_{\beta_l}\}$, it follows from the fact that $A_{\lambda,q}$ is an P(2)-set that $\{\lambda_{\alpha_1},\dots,\lambda_{\alpha_k}\}=\{\lambda_{\beta_1},\dots,\lambda_{\beta_l}\}$, which means that $D:=n_1+n_2+n_3$ is a \text{P}(3) point. Now if $\max\{\lambda_{\alpha_1},\dots,\lambda_{\alpha_k}\}\neq \max\{\lambda_{\beta_1},\dots,\lambda_{\beta_l}\}$, say $\lambda_{n+1}:=\max\{\lambda_{\alpha_1},\dots,\lambda_{\alpha_k}\}>\max\{\lambda_{\beta_1},\dots,\lambda_{\beta_l}\}=:\lambda_{s}$, we must have
	$$3\lambda_s<\lambda_{n+1}\leq \lambda_{\beta_1}+\dots+\lambda_{\beta_l}\leq 5 \lambda_{s}<3^2\lambda_{s},$$
	which implies $s=n$. In particular, we have (relabeling the terms if necessary):
	\begin{equation*}
		\lambda_{\alpha_1}+\dots+\lambda_{\alpha_{k-1}}+\lambda_{n+1}=\lambda_{\beta_1}+\dots+\lambda_{\beta_{l-1}}+\lambda_n,
	\end{equation*}
	which is possible only if
	$$\lambda_{n+1}=\lambda_n+\lambda_n+\lambda_n+\lambda_m+\lambda_k$$
	where $0\leq \lambda_m\leq \lambda_k\leq \lambda_{n}, \lambda_m+\lambda_k>0$,
	or
	$$\lambda_{n+1}+\lambda_m=\lambda_n+\lambda_n+\lambda_n+\lambda_k,$$
	where $ 0\leq \lambda_m<\lambda_k\leq \lambda_{n}$. Reorganizing the terms back in the form \eqref{IgualdadeA^3_E} one get that $D:=n_1+n_2+n_3\in A^3_E$. Moreover, all possible rearrangements of tree terms to each side give us the complete description of points in $A^3_E$.
\end{proof}

\begin{remark}\label{Nodoublerepetition}
	The fact that each $D\in A^3_E$ is written in exactly 2 different ways is an easy consequence of the fact that for each such $D$, say $D>0$, there exists an $n>0$ such that $\lambda_n<D<\lambda_{n+1}$, and now the claim follows from the observation that $A_{\lambda,q}$ is an P(2) set. It also follows from Lemma \ref{P3exceptions} that if $D\in A^3_{E}$, then at least one of the two different triples that sum up to $D$ has one element which repeats itself, i.e., every such $D$ is written as $n_1+n_2+n_3=m_1+m_1+m_2=D$. This is simply due to the fact that there always is an element in equations \eqref{DescriptionOfExceptions} that repeats itself at least 3 times. This innocent remark will be of importance later.
\end{remark}

Now we state the last piece of information needed to prove our main result. We need to improve upon \cite[Lemma 2]{Ciccone2024} to the case that best suits our purposes. Let $F:(\z_+)^3\to \r_+$ be the function
$$F(n_1,n_2,n_3)=\dfrac{\I(0,0,0)}{\I(n_1,n_2,n_3)}, \text{ where $\I(n_1,n_2,n_3):= I(n_1,n_1,n_2,n_2,n_3,n_3)$.}$$
From known asymptotics for Bessel functions, Ciccone and Gonçalves obtained explicit upper bounds for the growth of $\I(n_1,n_2,n_3)$ in terms of $\max\{n_1,n_2,n_3\}$, obtaining that $\I(n_1,n_2,n_3)$ can be upper bounded by a decreasing function of $\max\{n_1,n_2,n_3\}$ that decays as $O(\max\{n_1,n_2,n_3\}^{-\frac{1}{3}})$. In other words, $F(n_1,n_2,n_3)$ goes to infinity at least as fast as an increasing function that grows as $\gtrsim\max\{n_1,n_2,n_3\}^{\frac{1}{3}}$. Our task is reduced to check that $F(n_1,n_2,n_3)$ is not that small for small values of $n_i$ as well. More precisely, we need the following lemma, which is essentially due to \cite{Ciccone2024}.
\begin{lemma}\label{BoundsforF}
	The following lower bounds holds:
	\begin{enumerate}
		\item [(a)] For all $n\geq 1$, $F(n,0,0)\geq 5$ with equality only when $n=1$.
		\item [(b)] For all $n\geq 1$, $F(n,n,0)> 7.94$. Moreover, $F(n,n,0)> 10.8$ for $n\geq 3$.
		\item [(c)]For all $n\geq 1$, $F(n,n,n)> 3.2$.
		\item [(d)] For all $n,m\geq 1$ with $n\neq m$ and $(n,m)\neq (1,2)$, $F(n,n,m)>10$. Moreover, if $n,m\in \{\lambda_n\}$ then $F(n,n,m)>13.2$.
		\item [(e)] For all $n>m>k\geq 0$ with $(n,m,k)\neq (3,2,0)$, $F(n,m,k)>18$. Moreover, if $n,m,k\in \{\lambda_n\}$ then $F(n,m,k)>21$.
	\end{enumerate}
\end{lemma}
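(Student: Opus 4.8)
The plan is to reduce each of the five estimates to a combination of (i) a monotonicity statement that controls $F$ for large indices and (ii) a finite numerical check for small indices. The starting point is the observation, already recorded in the discussion preceding the lemma, that $\I(n_1,n_2,n_3)$ admits an explicit upper bound in terms of $M:=\max\{n_1,n_2,n_3\}$ that decays like $M^{-1/3}$; combined with the fixed value $\I(0,0,0)=\int_0^\infty J_0^6(r)\,r\,dr$, this yields a lower bound for $F(n_1,n_2,n_3)$ of the form $c\,M^{1/3}$ for an explicit constant $c$ (this is exactly \cite[Lemma 2]{Ciccone2024}, which we are quoting). First I would determine, for each of the five regimes (a)--(e), the threshold $M_0$ beyond which this asymptotic lower bound already exceeds the claimed constant ($5$, $7.94$/$10.8$, $3.2$, $10$/$13.2$, $18$/$21$ respectively). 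For all indices with $M\le M_0$ — a finite list in each case — I would then evaluate $\I(n_1,n_2,n_3)$ numerically, with rigorous error control, and verify the stated inequality directly, keeping track of the exceptional tuples $(1,0,0)$, $(1,2)$ in (d), $(3,2,0)$ in (e) where equality or the weaker bound is attained. The extra improvements ``$F>10.8$ for $n\ge 3$'' in (b), ``$F>13.2$ if $n,m\in\{\lambda_n\}$'' in (d), and ``$F>21$ if $n,m,k\in\{\lambda_n\}$'' in (e) come for free from the same table, since the lacunarity hypothesis $\lambda_{n+1}/\lambda_n>3$ forces the relevant $M$ to be at least $3$ (in (b)) or forces the triple to avoid the small exceptional configurations entirely, so one is simply reading off a larger entry of the finite list.

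The monotonicity half deserves a little more care, because the asymptotic bound of \cite{Ciccone2024} is stated for the max of all three arguments and one wants cleanly nested one-parameter statements. For part (a) I would show $n\mapsto F(n,0,0)$ is (eventually) increasing, so that once the numerical check covers $n=1,\dots,M_0$ the claim holds for all $n$; the same for $n\mapsto F(n,n,0)$ in (b), $n\mapsto F(n,n,n)$ in (c). For (d) and (e), which are genuinely two- and three-parameter, the cleanest route is to use the crude monotonicity $\I(n_1,n_2,n_3)\le \I(n_1',n_2',n_3')$ whenever the multiset $\{n_1,n_2,n_3\}$ dominates $\{n_1',n_2',n_3'\}$ coordinatewise after sorting — or, failing a clean such statement for $\I$, simply to invoke the $M^{-1/3}$ upper bound, which depends only on $M=\max$, so that $F(n_1,n_2,n_3)\ge c\,(\max)^{1/3}$ and the region left to check by hand is $\{\max\le M_0\}$, still finite.

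The main obstacle is the rigorous numerical evaluation of the oscillatory integrals $\I(n_1,n_2,n_3)=\int_0^\infty J_{n_1}^2 J_{n_2}^2 J_{n_3}^2\,r\,dr$ with certified error bars small enough to separate, say, $F(3,2,0)$ from $18$ or $F(1,2,1)$ from $10$ — the integrand decays only like $r^{-2}$ so the tail must be estimated analytically (via the standard asymptotic $J_\nu(r)=\sqrt{2/\pi r}\cos(r-\nu\pi/2-\pi/4)+O(r^{-3/2})$, which makes the tail of $J_{n_1}^2\cdots J_{n_3}^2\,r$ an explicitly summable expression plus a controlled remainder), while the bounded part is handled by a quadrature with an interval-arithmetic error term. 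This is the same computational scheme as in \cite{Ciccone2024, OliveiraeSilva2022, Barker2023}; the only new content here is enlarging the table of checked tuples and sharpening a few of the constants, which is why I would present the numerics compactly, recording the computed values of $\I$ for the relevant small tuples in a table and citing the error-control argument from \cite{Ciccone2024} rather than reproducing it.
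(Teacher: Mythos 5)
Your proposal is correct and follows the same skeleton as the paper's proof: use the explicit $\gtrsim\max\{n_1,n_2,n_3\}^{1/3}$ lower bound on $F$ from \cite[Lemma 2]{Ciccone2024} to dispose of all tuples with large maximum (the paper's thresholds are $21$ for (b), $110$ for (d), $340$ for (e)), then verify the finitely many remaining tuples numerically, reading the lacunary refinements in (b), (d), (e) off the restricted table exactly as you describe. The one substantive difference is the numerical certification: the paper does not integrate the oscillatory integrals $\I(n_1,n_2,n_3)$ directly, but instead uses Lemma \ref{Lema8} (a modification of \cite[Lemma 3]{Ciccone2024}), which replaces $\I(k,m,n)$ by the rapidly computable sum $\widetilde{\I}(k,m,n)$ over zeros of $J_1$ with certified error below $10^{-2}$ for $\max\le 532$, thereby sidestepping the $r^{-2}$-tail problem you flag as the main obstacle. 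Your direct quadrature-plus-tail-asymptotics scheme is workable, but note that the error-control argument of \cite{Ciccone2024} you propose to cite certifies their series representation, not a truncated quadrature of the integral, so you would have to carry out the tail and quadrature error analysis yourself --- which is precisely the labor the summation formula is designed to avoid. Finally, the monotonicity statements you sketch for parts (a)--(c) are unnecessary: as your own fallback observes (and as the paper does), the asymptotic bound depends only on the maximum, so the region left for the finite check is already finite without any monotonicity of $n\mapsto F(n,0,0)$, $F(n,n,0)$ or $F(n,n,n)$, and no clean coordinatewise monotonicity of $\I$ is available or needed.
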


The idea to prove Lemma \ref{BoundsforF} is simple: we use the explicit asymptotic available in \cite{Ciccone2024} to verify that after some fixed value of $\max\{n_1,n_2,n_3\}$ each bound in Lemma \ref{BoundsforF} holds, and then numerically we verify the desired bound holds for the remainders values. To perform numerical analysis faster, we use the following modified version of \cite[Lemma 3]{Ciccone2024}.
\begin{lemma}\label{Lema8}
	Let $k,m,n\geq 0$ be integers such that $\max\{k,m,n\}\leq 532$. Then
	\begin{equation*}
		0<\I(k,m,n)- \widetilde{\I}(k,m,n)<10^{-2},
	\end{equation*}
	where
	$$\widetilde{\I}(k,m,n):=\dfrac{2}{9}\sum_{r=0}^{1000} \dfrac{J_k^2 (\sigma_r/3)J_m^2 (\sigma_r/3)J_n^2 (\sigma_r/3)}{J_0^2 (\sigma_r)},$$
	and the sequence $\{\sigma_r\}_{r\geq 0}$ denotes the non-negative zeros of the Bessel function $J_1$.
\end{lemma}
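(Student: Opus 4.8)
The plan is to exhibit $\widetilde{\I}(k,m,n)$ as the partial Gauss--type quadrature sum for the integral $\I(k,m,n)$ and to estimate the tail. First I would recall that $\I(k,m,n) = I(k,k,m,m,n,n) = \int_0^\infty J_k^2(r) J_m^2(r) J_n^2(r)\, r\, dr$; substituting $r = s/3$ gives $\I(k,m,n) = \frac{1}{9}\int_0^\infty J_k^2(s/3) J_m^2(s/3) J_n^2(s/3)\, s\, ds$. The key point, exactly as in \cite[Lemma 3]{Ciccone2024}, is the Bessel quadrature identity: for suitable $g$,
\begin{equation*}
	\int_0^\infty g(s)\, s\, ds = 2 \sum_{r\geq 0} \frac{g(\sigma_r)}{J_0^2(\sigma_r)},
\end{equation*}
where $\{\sigma_r\}$ are the positive zeros of $J_1$. (This is the classical fact that $\{J_0(\sigma_r s)\}$ or, equivalently, the sampling formula associated to the Fourier--Bessel system of order $1$, is orthonormal; it is valid because $J_k^2(s/3)J_m^2(s/3)J_n^2(s/3)$ is, up to a smooth factor, a band-limited function of exponential type $\leq 2\cdot 3 \cdot \frac13 = 2 < \sigma_1$-type in the appropriate sense, so the quadrature is exact — this is precisely where the factor $1/3$ and the constraint $\max\{k,m,n\}\leq 532$ come in, see below). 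Applying this with $g(s) = J_k^2(s/3)J_m^2(s/3)J_n^2(s/3)$ yields
\begin{equation*}
	\I(k,m,n) = \frac{2}{9} \sum_{r\geq 0} \frac{J_k^2(\sigma_r/3) J_m^2(\sigma_r/3) J_n^2(\sigma_r/3)}{J_0^2(\sigma_r)},
\end{equation*}
so that $\I(k,m,n) - \widetilde{\I}(k,m,n) = \frac{2}{9}\sum_{r>1000} \frac{J_k^2(\sigma_r/3)J_m^2(\sigma_r/3)J_n^2(\sigma_r/3)}{J_0^2(\sigma_r)}$, which is manifestly positive; this gives the lower bound $0 < \I - \widetilde{\I}$.

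It remains to bound the tail above by $10^{-2}$. Here I would use the standard asymptotics $|J_\nu(x)| \leq \sqrt{2/(\pi x)}$ for $x \geq \nu$ together with the fact that $\sigma_r \approx (r + 1/4)\pi$ grows linearly, so $\sigma_r/3 \geq \sigma_r - \text{(fixed bound)} \geq \max\{k,m,n\}$ once $r$ is large enough — in particular for $r > 1000$ and $\max\{k,m,n\} \leq 532$ one has $\sigma_r/3 > 1000\pi/3 > 532$, so all three Bessel factors are in the oscillatory regime and each is $\leq \sqrt{6/(\pi \sigma_r)}$. Also $J_0^2(\sigma_r) = J_0^2(\sigma_r)$ is bounded below away from $0$ by a constant times $1/\sigma_r$ (from the asymptotic $J_0(x)^2 \sim \frac{2}{\pi x}\cos^2(x - \pi/4)$ and the interlacing of zeros of $J_0$ and $J_1$, which guarantees $|\cos(\sigma_r - \pi/4)|$ is bounded below). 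Putting these together, each summand is $O(\sigma_r^{-3} / \sigma_r^{-1}) = O(\sigma_r^{-2}) = O(r^{-2})$, so the tail $\sum_{r>1000}$ is $O(1/1000)$, and a careful tracking of the explicit constants — which is routine but must be done honestly — brings it below $10^{-2}$.

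The main obstacle, and the step requiring the most care, is justifying the exactness of the Bessel quadrature formula, i.e., that no error term appears in the passage from the integral to the infinite sum over zeros of $J_1$. This rests on \cite[Lemma 3]{Ciccone2024} and ultimately on the fact that a product of Bessel functions $J_{k}(s/3)J_m(s/3)J_n(s/3)$ has a "spectrum" (in the Hankel sense) supported where the quadrature is exact — but this requires $\max\{k,m,n\}$ not to be too large relative to the scaling by $1/3$ and the number of nodes, which is the source of the hypothesis $\max\{k,m,n\}\leq 532$ and the choice of $1000$ summation nodes. I would therefore: (1) verify that the scaling $r \mapsto r/3$ combined with $\max\{k,m,n\}\leq 532$ keeps us in the regime where \cite[Lemma 3]{Ciccone2024} applies verbatim (possibly re-deriving the relevant band-limitedness estimate); (2) record the exact quadrature identity; (3) estimate the tail with explicit constants as above. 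Steps (2) and (3) are essentially bookkeeping; step (1) is where one must be cautious that the modification relative to \cite{Ciccone2024} (a different constant, a different node count, a different range for $k,m,n$) does not break the exactness.
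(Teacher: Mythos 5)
Your outline is the argument the paper has in mind: the paper gives no details, simply declaring the lemma a straightforward modification of \cite[Lemma 3]{Ciccone2024}, and that lemma is exactly the scheme you describe --- represent $\I(k,m,n)$ exactly by the Bessel quadrature series over the zeros of $J_1$ (scaled by $1/3$), note that the truncation error is a sum of non-negative terms, and bound the tail $r>1000$ explicitly. So the structure is right; two points in your write-up need repair, though neither is fatal.

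First, you misplace the role of the hypothesis $\max\{k,m,n\}\le 532$. Exactness of the quadrature has nothing to do with the orders: $J_\nu$ has exponential type $1$ for every $\nu$, so $J_k^2(r)J_m^2(r)J_n^2(r)$ has type $6$ regardless of $k,m,n$, and after the substitution $r=s/3$ the integrand is an even entire function of type exactly $2$, which is admissible for the quadrature with nodes at the zeros of $J_1$ and weights $2/J_0^2(\sigma_r)$ (Frappier--Olivier, Grozev--Rahman; this is what \cite[Lemma 3]{Ciccone2024} rests on). There is no ``band-limitedness estimate depending on $\max\{k,m,n\}$'' to re-derive, and your accounting ``$2\cdot 3\cdot \tfrac13 = 2 < \sigma_1$'' is not the right bookkeeping. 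The constraints $\max\{k,m,n\}\le 532$ and the cutoff at $1000$ nodes enter only in the tail estimate, where they guarantee that for $r>1000$ one has $\sigma_r/3 > 1047$, nearly twice $532$, so every Bessel factor is well past its turning point and the omitted mass is tiny. Second, the inequality $|J_\nu(x)|\le\sqrt{2/(\pi x)}$ for all $x\ge\nu$ is false near the turning point, where $J_\nu(\nu)\asymp \nu^{-1/3}\gg \nu^{-1/2}$; use instead the valid bound $|J_\nu(x)|\le \sqrt{2/\pi}\,(x^2-\nu^2)^{-1/4}$ for $x>\nu$. In the tail $x=\sigma_r/3\ge 1047$ and $\nu\le 532$, so this costs only a factor of about $1.08$ per Bessel factor, and combined with the fact that $J_0^2(\sigma_r)$ is bounded below by $2/(\pi\sigma_r)$ up to a factor negligibly close to one (the $\sigma_r$ are the critical points of $J_0$), each tail term is at most roughly $0.4/r^2$, giving a tail of order $4\cdot 10^{-4}$, comfortably below $10^{-2}$. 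With these two corrections your proposal is the intended proof.
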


The proof of Lemma \ref{Lema8} is straightforward modification of \cite[Lemma 3]{Ciccone2024}. We leave the details for the interested reader.

In what follows, the numerical calculations were performed using [PARI-GP, version 2.15.3] computer algebra system.

\begin{proof}[Proof of Lemma \ref{BoundsforF}]
	From the explicit bounds for $F(n_1,n_2,n_3)$ given in \cite{Ciccone2024}, we have that $F(n,n,0)>10.8$ for every $n\geq \text{21}$. If $3\leq n <21$ we use Lemma \ref{Lema8} to check that we have $F(n,n,0)>10.8$. Similarly from the explicit bounds, we have that $F(n,n,m)>13.2$ whenever $n,m\geq 1$ and $\max\{n,m\}\geq 110$, now if $n,m\in \{\lambda_n\}\backslash\{0\}$ with $n\neq m$, then we can check numerically that $F(n,n,m)>13.2$ for $\max\{n,m\}<110$. Again from the explicit bounds for $F$, whenever $0 \leq k<m<n$ and $n\geq 340$, $F(k,m,n)>21$, whereas if $n<340$, $F(k,m,n)>21$.
	
	To conclude the proof of our lemma, we remark that all of the remaining inequalities follow from checking more precisely the constants in \cite[Lemma 2]{Ciccone2024}.
\end{proof}

\section{Proof of Main Result}

\begin{proof}[Proof of Theorem \ref{Lacunarycaseq>3}]
The starting point is the formula \eqref{FormulaEmanuelDiogo}. Consider the decomposition $A_{\lambda,q}^3=A_{\text{P(3)}}^3\cup A_{E}^3$, we write
\begin{align*}
	\begin{array}{rl}
		&S:=(2\pi)^{-7}||\widehat{f\sigma}||^{6}_{L^6(\r^2)}=\\
		&=\displaystyle{\left(\sum_{D\in A_{\text{P(3)}}^3}+\sum_{D\in A_E^3}\right)\sum_{\substack{n_1,\dots,n_6\in A\\n_1+n_2+n_3=D\\n_4+n_5+n_6=D}} \widehat{f}(n_1)\widehat{f}(n_2) \widehat{f}(n_3)\overline{\widehat{f}(n_4)}\overline{\widehat{f}(n_5)}\overline{\widehat{f}(n_6)}I(n_1,\dots,n_6)}\\
		&=S_{\text{P(3)}}+S_E.
	\end{array}
\end{align*}

Now our goal is to find upper bounds for $S_{\text{P(3)}}$ and $S_E$. Let us deal with $S_E$. To simplify notation, for each $ D\in A^3_E$ let $\{n_1^D,n_2^D,n_3^D\}$ and $\{n_4^D,n_5^D,n_6^D\}$ be the two different triples that satisfy $n_1+n_2+n_3=D$, and let $p(n_1,n_2,n_3)$ denote the number of permutations of $\{n_1,n_2,n_3\}$. Thus it follows that
	\begin{align*}
		&S_E=\displaystyle{\sum_{D\in A_E^3}\sum_{\substack{n_1,\cdots,n_6\in A_{\lambda,q}\\n_1+n_2+n_3=D\\n_4+n_5+n_6=D}} \widehat{f}(n_1)\widehat{f}(n_2) \widehat{f}(n_3)\overline{\widehat{f}(n_4)}\overline{\widehat{f}(n_5)}\overline{\widehat{f}(n_6)}I(n_1,\cdots,n_6)}\\
		&=\displaystyle{\sum_{D\in A_E^3} p(n_1^D,n_2^D,n_3^D)^2 |\widehat{f}(n_1^D)|^2|\widehat{f}(n_2^D)|^2 |\widehat{f}(n_3^D)|^2\I(n_1^D,n_2^D,n_3^D)}\\
		&+\displaystyle{\sum_{D\in A_E^3} p(n_4^D,n_5^D,n_6^D)^2 |\widehat{f}(n_4^D)|^2|\widehat{f}(n_5^D)|^2 |\widehat{f}(n_6^D)|^2\I(n_4^D,n_5^D,n_6^D)}\\
		&+\displaystyle{2\re \sum_{D\in A_E^3} p(n_1^D,n_2^D,n_3^D)p(n_4^D,n_5^D,n_6^D)\widehat{f}(n_1^D)\widehat{f}(n_2^D) \widehat{f}(n_3^D)\overline{\widehat{f}(n_4^D)}\overline{\widehat{f}(n_5^D)}\overline{\widehat{f}(n_6^D)}I(n_1^D,\cdots,n_6^D)}.
	\end{align*}
	
	From the Cauchy-Schwarz inequality, we get $I(n_1,\dots,n_6)\leq \I(n_1,n_2,n_3)^{\frac{1}{2}}\I(n_4,n_5,n_6)^{\frac{1}{2}}$. Now it follows from the triangular inequality, the basic inequality $2xy\leq \dfrac{x^2}{\varepsilon}+\varepsilon y^2$ and the identity $J_{-n}=(-1)^nJ_n$ that we have the bound
	\begin{align}\label{BoundforSe}
		S_E&\leq \displaystyle{\sum_{D\in A_E^3}\left(1+\dfrac{1}{\varepsilon_D}\right) p(n_1^D,n_2^D,n_3^D)^2 |\widehat{f}(n_1^D)|^2|\widehat{f}(n_2^D)|^2 |\widehat{f}(n_3^D)|^2\I(n_1^D,n_2^D,n_3^D)}\\
		&+\displaystyle{\sum_{D\in A_E^3}\left(1+\varepsilon_D\right) p(n_4^D,n_5^D,n_6^D)^2 |\widehat{f}(n_4^D)|^2|\widehat{f}(n_5^D)|^2 |\widehat{f}(n_6^D)|^2\I(n_4^D,n_5^D,n_6^D)},
	\end{align}
	where $\varepsilon_D$ is a positive number that is to be chosen later depending on the two different triples that satisfies $D=n_1+n_2+n_3$. Let us further decompose the set $A^3_E$ as follows: let $A_1^3$ be the set of points $D\in A_E^3$ such that the two different triples that sum to $D$ have repeated elements, and $A^3_2:=A^3_E\backslash A^3_1$, i.e., $A_2^3$ denotes the subset of points $D\in A^3_E$ such that one, and by Remark \ref{Nodoublerepetition} only one, of the two triples whose sum is $D$ has no repeated elements. The upper bound \eqref{BoundforSe} may be rewritten as
	\begin{align*}
		&S_E\leq \sum_{\substack{n_1,n_2,n_3\in A_{\lambda,q}\\|n_i|\neq |n_j|, i\neq j}}6\left(1+\dfrac{1}{\varepsilon_D}\right)\delta_{n_1+n_2+n_3\in A_2^3} |\widehat{f}(n_1)|^2|\widehat{f}(n_2)|^2|\widehat{f}(n_3)|^2\I(n_1,n_2,n_3)\\
		&+\sum_{\substack{n_1\in A_{\lambda,q}\backslash \{0\},n_2\in A_{\lambda,q}\\|n_1|\neq |n_2|}}9\left(\delta_{2n_1+n_2\in A^3_E}+{\varepsilon_D}\delta_{2n_1+n_2\in A_2^3} \right)  |\widehat{f}(n_1)|^4|\widehat{f}(n_2)|^2\I(n_1,n_1,n_2)\\
		&+\sum_{\substack{n_1\in A_{\lambda,q}\backslash \{0\},n_2\in A_{\lambda,q}\\|n_1|\neq |n_2|}}9\left(\dfrac{\delta_{2n_1+n_2\in A^3_1\cap 3A_{\lambda,q}}}{\varepsilon_D}+\varepsilon_D^a\delta_{2n_1+n_2\in A^3_1\backslash 3A_{\lambda,q}} \right)  |\widehat{f}(n_1)|^4|\widehat{f}(n_2)|^2\I(n_1,n_1,n_2)\\
		&+\sum_{\substack{n_1\in A_{\lambda,q}\backslash \{0\}}}  \left(\delta_{3n_1\in A^3_2}+\varepsilon_D\delta_{3n_1\in A^3_1}+{\varepsilon_D}\delta_{3n_1\in A_2^3} \right) |\widehat{f}(n_1)|^6 \I(n_1,n_1,n_1),
	\end{align*}
	where $a\in \{-1,0,1\}$ is chosen to be zero if $D\notin 2A_{\lambda,q}$, and if $D=2n_1+n_2\in 2A_{\lambda,q}$, we set $a=1$ if $n_2=0$ and $a=-1$ if $n_2\neq 0$. This convention is well define since each $D\in A_1^3\backslash 3A_{\lambda,q}$ is written as $D=2n_1+n_2=2m_1+m_2$ with $\{n_1,n_2\}\neq \{m_1,m_2\}$ and $n_2,m_2$ can not simultaneously be zero.
	
	Now to find a satisfactory upper bound for $S_{\text{P(3)}}$ we can perform the same technique as in \cite{Ciccone2024} to control the sum over the trivial points: as a key step in their proof, they used the basic inequality $r^3s\leq \frac{5}{8}r^4+\frac{1}{8}s^4+\frac{1}{4}r^2s^2$. Instead we use the basic inequality
	\begin{equation}\label{FamillyofBasicinequalities}
		r^3s\leq \dfrac{b}{2b-2}r^4+\frac{1}{2b-2}s^4+\frac{b-3}{2b-2}r^2s^2, \, \forall b>1,
	\end{equation}
	and we get the same bound for $S_{\text{P}(3)}$ as in \cite{Ciccone2024} with a slight modification in the coefficients that now depend on $b$, with the advantage that now we can optimize the parameter $b$ (note that letting $b=5$ we recover $r^3s\leq\frac{5}{8}r^4+\frac{1}{8}s^4+\frac{1}{4}r^2s^2$). Now, putting together the upper bound for $S_E$ with the upper bound for $S_{\text{P}(3)}$ we have
	\begin{align*}
		&S \leq  \sum_{\substack{n_1,n_2,n_3\in A_{\lambda,q}\\|n_i|\neq |n_j|, i\neq j}}\left(15+\dfrac{6}{\varepsilon_D}\delta_{n_1+n_2+n_3\in A_2^3}\right) |\widehat{f}(n_1)|^2|\widehat{f}(n_2)|^2|\widehat{f}(n_3)|^2\I(n_1,n_2,n_3)\\
		&+\sum_{\substack{n_1\in A_{\lambda,q}\backslash \{0\},n_2\in A_{\lambda,q}\\|n_1|\neq |n_2|}}9\left(1+\delta_{n_1,n_2\in A_{\lambda,q}\backslash\{0\}}+{\varepsilon_D}\delta_{2n_1+n_2\in A_2^3} \right)  |\widehat{f}(n_1)|^4|\widehat{f}(n_2)|^2\I(n_1,n_1,n_2)\\
		&+\sum_{\substack{n_1\in A_{\lambda,q}\backslash \{0\},n_2\in A_{\lambda,q}\\|n_1|\neq |n_2|}}9\left(\dfrac{\delta_{2n_1+n_2\in A^3_1\cap 3A_{\lambda,q}}}{\varepsilon_D}+\varepsilon_D^a\delta_{2n_1+n_2\in A^3_1\backslash 3A_{\lambda,q}} \right)  |\widehat{f}(n_1)|^4|\widehat{f}(n_2)|^2\I(n_1,n_1,n_2)\\
		&+\sum_{\substack{n_1\in A_{\lambda,q}\backslash \{0\}}}  \left(1+\varepsilon_D\delta_{3n_1\in A^3_1}+{\varepsilon_D}\delta_{3n_1\in A_2^3} \right) |\widehat{f}(n_1)|^6 \I(n_1,n_1,n_1)\\
		&+\sum_{\substack{n_1,n_2\in A_{\lambda,q} \\ |n_1|\neq |n_2|}}(18-9\delta_{n_2=0}) |\widehat{f}(n_1)|^2|\widehat{f}(n_2)|^2|\widehat{f}(-n_2)|^2\I(n_1,n_2,n_2)\\
		&+\sum_{\substack{n_1,n_2\in A_{\lambda,q}\backslash\{0\}\\ |n_1|\neq |n_2|}} 9|\widehat{f}(n_1)|^2|\widehat{f}(-n_1)|^2|\widehat{f}(n_2)|^2\I(n_1,n_1,n_2)\\
		&+\dfrac{9(b+1)}{b-1}\sum_{n_1\in A_{\lambda,q}\backslash\{0\}}|\widehat{f}(n_1)|^4|\widehat{f}(0)|^2\I(n_1,n_1,0)\\
		&+\dfrac{9(b-3)}{b-1}\sum_{n_1\in A_{\lambda,q}\backslash\{0\} }|\widehat{f}(n_1)|^2|\widehat{f}(-n_1)|^{2}|\widehat{f}(0)|^2\I(n_1,n_1,0)+6\sum_{n_1\in A_{\lambda,q}\backslash\{0\}}|\widehat{f}(n_1)|^2|\widehat{f}(0)|^{4}\I(n_1,0,0)\\&
		+9\sum_{n_1\in A_{\lambda,q}\backslash\{0\}}|\widehat{f}(n_1)|^4|\widehat{f}(-n_1)|^{2}\I(n_1,n_1,n_1)+|\widehat{f}(0)|^6\I(0,0,0),
	\end{align*}
	for every $b> 1$.
	
	Similarly we can write $||f||_{L^2(\mathbb{S}^1)}^6$ as
	\begin{align*}
		(2\pi)^{-3}||f||_{L^2(\mathbb{S}^1)}^6&= \sum_{\substack{n_1,n_2,n_3\in A_{\lambda,q}\\|n_i|\neq |n_j|, i\neq j }}|\widehat{f}(n_1)|^2|\widehat{f}(n_2)|^2|\widehat{f}(n_3)|^2+\sum_{\substack{n_1,n_2\in A_{\lambda,q}\backslash\{0\}\\ |n_1|\neq |n_2|}}3 |\widehat{f}(n_1)|^4|\widehat{f}(n_2)|^2\\
		&+\sum_{n_1\in A_{\lambda,q}\backslash\{0\}} |\widehat{f}(n_1)|^6+\sum_{\substack{n_1,n_2\in A_{\lambda,q}\backslash\{0\}\\ |n_1|\neq |n_2|}}3 |\widehat{f}(n_1)|^2|\widehat{f}(n_2)|^2|\widehat{f}(-n_2)|^2\\
		&+\sum_{n_1\in A_{\lambda,q}\backslash\{0\}} 3|\widehat{f}(n_1)|^4|\widehat{f}(0)|^2+\sum_{n_1\in A_{\lambda,q}\backslash\{0\}} 3|\widehat{f}(n_1)|^2|\widehat{f}(-n_1)|^2|\widehat{f}(0)|^2\\
		&+\sum_{n_1\in A_{\lambda,q}\backslash\{0\}} 3|\widehat{f}(n_1)|^2|\widehat{f}(0)|^4+\sum_{n_1\in A_{\lambda,q}\backslash\{0\}} 3|\widehat{f}(n_1)|^4|\widehat{f}(-n_1)|^2+|\widehat{f}(0)|^6.
	\end{align*}
	
	To prove the inequality $||\widehat{f\sigma}||^6_{L^6(\r^2)}\leq (2\pi)^4\left(\int_0^{\infty}J_0^6(r)rdr\right)||f||^6_{L^2(\mathbb{S}^1)}$ it suffices to verify that there exists $\{\varepsilon_D\}_{D\in A^3_E}$ and $b>1$ such that the following inequalities are true:
		\begin{equation}\label{Sistematrivial}
		\left\{\begin{array}{l}
			9\leq 3F(n_1,n_1,n_1),|n_1|>0\\
			15\leq 3 F(n_1,0,0),|n_1|>0\\
			\dfrac{9(b-3)}{b-1}+18\leq 3F(n_1,n_1,0),|n_1|>0\\
			\dfrac{9(b+1)}{b-1}+9\leq 3F(n_1,n_1,0),|n_1|>0\\
			27 \leq 3F(m_1,m_1,m_2),|m_1|\neq |m_2|, |m_1|,|m_2|>0\\
			15\leq F(n_1,n_2,n_3),|n_1|>|n_2|>|n_3|\geq 0
		\end{array}\right.
	\end{equation}
	and also that there exist $\varepsilon_D>0$ such that the following systems of inequalities have solutions:
	\begin{equation}\label{System2}
		\left\{\begin{array}{l}
			15+\dfrac{6}{\varepsilon_D} \leq F(n_1,n_2,n_3), |n_1|>|n_2|>|n_3|> 0\\
			\dfrac{9(b+1)}{b-1}\delta_{m_2=0}+9(1+\delta_{m_1,m_2\in A_{\lambda,q}\backslash\{0\}}+\varepsilon_D)\leq 3F(m_1,m_1,m_2),|m_1|\neq |m_2|,|m_1|>0
		\end{array}\right.
	\end{equation}
	where $n_1+n_2+n_3=2m_1+m_2$,
		\begin{equation}\label{System3}
		\left\{\begin{array}{l}
			15+\dfrac{6}{\varepsilon_D} \leq F(n_1,n_2,n_3), |n_1|>|n_2|>|n_3|> 0\\
			1+\varepsilon_D\leq F(m_1,m_1,m_1),|m_1|>0
		\end{array}\right.
	\end{equation}
	where $n_1+n_2+n_3=3m_1$, also
		\begin{equation}\label{System4}
		\left\{\begin{array}{l}
			\dfrac{9(b+1)}{b-1}+9(1+\varepsilon_D)\leq 3F(n_1,n_1,0),|n_1|>0\\
			9\left(2+\dfrac{1}{\varepsilon_D}\right)\leq 3F(m_1,m_1,m_2),|m_1|\neq |m_2|,|m_2|,|m_1|>0
		\end{array}\right.
		\end{equation}
	where $2n_1=2m_1+m_2$ and finally
	\begin{equation}\label{System5}
		\left\{\begin{array}{l}
			9\left(2+\dfrac{1}{\varepsilon_D}\right)\leq 3F(n_1,n_1,n_2),|n_1|\neq |n_2|,|n_2|,|n_1|>0\\
			1+\varepsilon_D\leq F(m_1,m_1,m_1),|m_1|>0
		\end{array}\right.
	\end{equation}
	where $2n_1+n_2=3m_1$.
	
From Lemma \ref{BoundsforF} it is clear that all inequalities in \eqref{Sistematrivial} are true as long as $4.2 \leq b\leq 6.6604...$, and that the systems \eqref{System3} and \eqref{System5} have solutions if one takes $\varepsilon_D$ to be 2 and 1, respectively. From now on we can fix $b=6.66$. If $|n_1|\geq 3$ in \eqref{System4}, then we also have solutions by simply taking $\varepsilon_D=1$; if $|n_1|\in \{1,2\}$, in conjunction with Lemma \ref{P3exceptions} we can check numerically that we also have solutions for \eqref{System4}. In fact, if $|n_1|=1$, then by Lemma \ref{P3exceptions} we know that $\{|m_1|,|m_2|\}=\{1,4\}$, thus $F(n_1,n_1,0)>7.94$ whereas numerically we have $F(m_1,m_1,m_2)>17.3$, which allows us to take any $0.270<\varepsilon_D<0.289$. Similarly, if $|n_2|=2$ we have $\{|m_1|,|m_2|\}=\{2,8\}$ and one can take $0.110<\varepsilon_D<0.490$. To handle \eqref{System2}, first observe that if $m_2\neq 0$, then by Lemma \ref{BoundsforF} we see that one can take $\varepsilon_D=2$. If $m_2=0$, then we see that $\varepsilon_D$ can be taken to be $\varepsilon_D=1$, because $F(n_1,n_2,n_3)>21$ and the condition $n_1+n_2+n_3=m_1+m_1+0$ with $|n_1|>|n_2|>|n_3|\geq 0$ implies by Lemma \ref{P3exceptions} that $(|n_1|,|n_2|,|n_3|)=(3|m_1|+\lambda_k,|m_1|, \lambda_k)$, where $0<\lambda_{k}<|m_1|$. In particular, we have $|m_1|>3$, which by Lemma \ref{BoundsforF} implies that $F(m_1,m_1,0)>10.8$. Thus the proof of \eqref{SharpInequality} is complete. As every inequality in Lemma \ref{BoundsforF} was strict, we have equality in \eqref{SharpInequality} only if $\text{spec}(f)=\{0\}$, which concludes the proof of Theorem \ref{Lacunarycaseq>3}.
\end{proof}

\section*{Acknowledgments}
The authors are grateful to David José Melo, Antonio Pedro Ramos and João P. G. Ramos for stimulating discussions during the preparation of this work. The first author acknowledges support from the following funding agencies: The Office of Naval Research GRANT14201749 (award number N629092412126), The Serrapilheira Institute (Serra-2211-41824), FAPERJ (E-26/200.209/2023) and CNPq (309910/2023-4). The second author acknowledges the support of CAPES (88887.949809/2024-00) scholarship.

	\bibliographystyle{abbrv}
	\bibliography{References}
\nocite{GPpari}

\end{document}